\newtheorem{problem}{Problem}
\theoremstyle{remark}
\newtheorem{Remark}{Remark}
\newtheorem{Lemma}{Lemma}
\newtheorem{Theorem}{Theorem}
\renewcommand{\Re}{\mathbb{R}}
\newcommand{\Expectation}{\mathbb{E}}
\newcommand{\trace}{\mathtt{tr}}
\newcommand{\ScriptA}{\mathcal{A}}
\newcommand{\ScriptB}{\mathcal{B}}
\newcommand{\ScriptD}{\mathcal{D}}
\title{\LARGE \bf Optimal Stochastic Vehicle Path Planning Using Covariance Steering}
\author{Kazuhide Okamoto\thanks{K. Okamoto is with the School of Aerospace Engineering, Georgia Institute of Technology, Atlanta, GA 30332-0150, USA. Email: kazuhide@gatech.edu} \qquad
	Panagiotis Tsiotras\thanks{P. Tsiotras is with the School of Aerospace Engineering, and also with the Institute for Robotics and Intelligent Machines, Georgia Institute of Technology, Atlanta, GA 30332-0150, USA. Email: tsiotras@gatech.edu}}
\begin{document}
	\maketitle
	\thispagestyle{empty}
	\pagestyle{empty}
	
\begin{abstract}
	This work addresses the problem of vehicle path planning in the presence of obstacles and uncertainties, which is a fundamental problem in robotics. 
	While many path planning algorithms have been proposed for decades, many of them have dealt with only deterministic environments or only \emph{open-loop} uncertainty, i.e., the uncertainty of the system state is not controlled and, typically, increases with time due to exogenous disturbances, which leads to the design of potentially conservative nominal paths. 
	In order to deal with disturbances and reduce uncertainty, generally, a lower-level feedback controller is used.
	We conjecture that, if a path planner can consider the \emph{closed-loop} evolution of the system uncertainty, it can compute less conservative but still feasible paths.
	To this end, in this work we develop a new approach that is based on optimal covariance steering, which explicitly steers the state covariance for stochastic linear systems with additive noise under non-convex state chance constraints.
	The proposed framework is verified using simple numerical simulations.
\end{abstract}

\section{Introduction}\label{sec:Introduction}
Vehicle path planning problems have been an active research topic for more than a decade including spacecraft~\cite{richards2002spacecraft}, unmanned aerial vehicles (UAVs)~\cite{hao2005differential}, and self-driving vehicles~\cite{kuwata2009real}.
A good comprehensive reference is \cite{LaValle:06}.
Among the existing literature, sampling-based algorithms such as the rapidly-exploring random trees (RRT)~\cite{lavalle1998rapidly}  have been popular for solving motion planning problems.
As the original RRT algorithm does not have any guarantees that the solution converges to the global optimum, variants of RRT have been proposed that offer asymptotic optimality guarantees, such as RRT$^\ast$~\cite{karaman2011sampling} and RRT$^\sharp$~\cite{arslan2013use}.
However, in general, these approaches deal with deterministic dynamics and cannot deal with uncertain systems. 

In order to solve path planning problems under uncertainty, several approaches have been proposed, such as the chance-constrained RRTs~\cite{luders2010chance,luders2013robust}, and mixed-integer programming approaches~\cite{blackmore2011chance,pinto2017risk}.
These approaches consider only the \emph{open-loop} dynamics of the covariance, i.e., the covariance of the system state is not controlled and, typically, increases with time due to the disturbance. 
In order to add robustness with respect to disturbances, state-feedback \emph{closed-loop} controllers are applied \emph{after the fact}.
Thus, it is more natural to consider the closed-loop evolution of the covariance for path planning.

Recently, path-planning problems with closed-loop covariance dynamics have been addressed by several researchers, such as~\cite{van2011lqg,bry2011rapidly}.
Vitus and Tomlin~\cite{vitus2011closed} also addressed a similar problem using mixed-integer programming (MIP).
Our work has connections with this line of work.

MIP approaches have also been actively investigated for path-planning problems in non-convex state constraint environments.
For example, mixed-integer linear programming (MILP) has been used for path planning problems~\cite{richards2002spacecraft,schouwenaars2001mixed}.
In addition, mixed-integer quadratic programming (MIQP) has been employed for path planning of multiple UAVs~\cite{mellinger2012mixed}, and mixed-integer semi-definite programming (MISDP) was used for UAV path planning~\cite{deits2015efficient}.
Note that these MIP-based approaches do not explicitly consider system uncertainty. 
The MIP approaches that cope with system uncertainty have been discussed by several researchers, such as~\cite{blackmore2011chance,pinto2017risk}.
However, these works consider only the \emph{open-loop} dynamics of the covariance, which may lead to unnecessarily conservative solutions.
Vitus et al.~\cite{vitus2008locally} dealt with a path-planning problem with \emph{closed-loop} dynamics of the covariance using the so-called Tunnel-MILP approach, which decomposes a non-convex environment into convex polygons and solves the optimal control problem through the convex polygons.
In this formulation, integer variables indicate in which polygon the state variable belongs to at each time step.
This is computationally more efficient than other MIP approaches, which typically need separate integer variables for every face of every obstacle.
While the original Tunnel-MILP approach could not cope with constraint violations between time steps, Deits and Tedrake~\cite{deits2014footstep} proposed a new constraint such that two consecutive system states have to belong to the same convex polygon in order for the system state to remain in the same polygon between consecutive time steps, implying no collision with obstacles.

These previous MIP approaches do not consider the terminal state distribution.
In fact, steering the covariance to a pre-specified value at a given time horizon needs to be formulated as a numerical optimization problem \cite{okamoto2018Optimal}.
The approach proposed in this work deals with the closed-loop dynamics of the covariance, computes the collision-free path under non-convex state chance constraints, and steers the system state to a pre-specified Gaussian distribution while minimizing a state and control expectation-dependent cost.

\subsection{Optimal Covariance Control}
In this work we design an optimal path planner that steers the mean and the covariance of a stochastic time-varying discrete linear system from initial values to pre-specified terminal values at a given time step in the presence of obstacles and uncertainties.
We assume that an initial-state Gaussian distribution and a state and input-independent white-noise Gaussian diffusion with known statistics are given a priori.
The system is controlled with the aim of steering the state to the target Gaussian distribution, while minimizing a state and control expectation-dependent cost.
In addition, to deal with obstacles in the environment, we consider the task under non-convex state constraints.
As the diffusion is unbounded, we need to probabilistically formulate the state constraints in terms of chance constraints.

Controlling the state covariance of a linear system has been researched since the late `80s. 
Hotz and Skelton~\cite{hotz1985covariance,hotz1987covariance} were the first authors to introduce the so-called covariance steering (or covariance assignment) problem and computed the state feedback gains that steer the steady-state covariance of a linear time-invariant system to converge to a pre-specified value.
Since then, many works have been devoted to the \emph{infinite} horizon covariance assignment problem, both for continuous and discrete time systems~\cite{iwasaki1992quadratic,xu1992improved}. %\cite{iwasaki1992quadratic,xu1992improved,grigoriadis1997minimum,Collins1987}
It is interesting to note that, until recently, the \emph{finite}-horizon covariance control problem, in which the controller steers the system covariance to a pre-specified value at a given time step, had not been investigated.
Chen et al.~\cite{chen2016optimalI,chen2016optimalII,chen2018optimalIII} related the optimal covariance steering problem with the problems of Schr\"{o}dinger bridges~\cite{Schrodinger1931} and the optimal mass transfer~\cite{Kantorovich1942}.
Ridderhof and Tsiotras used a similar approach for the stochastic control of a Mars lander during powered descent~\cite{ridderhof2018uncertainty}.
Other approaches regard the finite horizon covariance controller as a linear quadratic Gaussian (LQG) controller with particular weights~\cite{goldshtein2017finite}. %\cite{halder2016finite,goldshtein2017finite}. 
This approach can also be formulated and solved as a convex programming problem~\cite{bakolas2016optimalCDC,bakolas2018finite}.
%\cite{bakolas2016optimalACC,bakolas2016optimalCDC,bakolas2017covariance,bakolas2018finite}.

Our previous work~\cite{okamoto2018Optimal} converted the finite-horizon optimal control problem into a convex programming problem in the presence of state chance constraints.
Chance constraints are used for stochastic control problems and impose maximum probability of constraint violation instead of hard constraints.
This formulation is useful for systems with potentially unbounded disturbance, because for such systems hard constraints are meaningless.
Chance-constrained optimization has been extensively studied since the `50s, with the aim of designing systems with guaranteed performance under uncertainty~\cite{Geletu2013}.
Various techniques have been proposed to solve the stochastic model-predictive control (for example, see~\cite{Farina2016} for an extensive review).
In this work, in order to cope with state chance-constraints, we follow the approach proposed in~\cite{blackmore2009convex}, where, using Boole's inequality~\cite{prekopa1988boole}, the authors showed that, in the case of Gaussian additive disturbance and a linear system, the optimal control problem can be converted into a convex programming problem with little conservatism.

The contributions of this work are twofold.
The first contribution is to solve the optimal covariance steering problem under \emph{non-convex} state chance constraints, which, to the best of our knowledge, has not been addressed in the literature.
The proposed approach facilitates the path planner that computes the optimal trajectory in the presence of obstacles and uncertainties.
We use mixed-integer convex programming and efficiently decomposes the admissible state space to a union of overlapping convex sets.
Since mixed-integer problems are in general NP-hard, numerically efficient approaches are required.
Thus, as the second contribution of this work, we introduce a new optimal covariance steering approach that is faster and requires fewer computations than the one proposed in \cite{okamoto2018Optimal}, which used the state values of all previous time steps and did not utilize the Markov property of the system dynamics.
Together, the two contributions allow us to solve path-planning problems in complex domains.
Our numerical simulations show how incorporating the covariance to the problem formulation changes the resulting optimal paths.

%The remainder of this paper is organized as follows.
%Section \ref{sec:ProblemStatement} formulates the problem under consideration and provides the preliminaries needed for the proposed framework. 
%In Section \ref{sec:ProposedApproach}, we introduce a numerically efficient approach to solve the optimal covariance steering problem.
%In the same section we introduce the proposed algorithm to solve the non-convex chance-constrained optimal covariance steering problem.
%In Section \ref{sec:Numerical Simulation}, we verify the effectiveness of the proposed approach using simple numerical examples.
%Finally, Section \ref{sec:Discussion&Conclusion} discusses the new approach, summarizes the contribution of this work, and proposes some possible future research directions.

\section{Problem Statement\label{sec:ProblemStatement}}
This section formulates the general, non-convex chance-constrained optimal covariance steering problem.

\subsection{Problem Formulation}
The system dynamics consists of the following (possibly time-varying) discrete-time stochastic linear system with additive uncertainty,
\begin{equation}  \label{eq:SystemDynamics}
	x_{k+1} = A_kx_k + B_ku_k + D_kw_k, 
\end{equation}
where $k = 0,1,\ldots,N-1$ is the time step, $x\in\Re^{n_x}$ is the state, $u\in\Re^{n_u}$ is the control input, and $w\in\Re^{n_w}$ is a zero-mean white Gaussian noise with unit covariance, that is, 
$\mathbb{E}\left[w_k\right] = 0$, $\mathbb{E}\left[w_{k_1}{w_{k_2}^{\top}}\right] = I_{n_w}$ if $k_1 = k_2$, $\mathbb{E}\left[w_{k_1}{w_{k_2}^{\top}}\right] = 0$ otherwise. 
We also assume that $\mathbb{E}\left[x_{k_1}w_{k_2}^{\top}\right]=0$, $0 \leq k_1 \leq k_2 \leq N$.
The initial state $x_0$ is a random vector that is drawn from the multi-variate normal distribution
\begin{equation}\label{eq:x0}
	x_0\sim\mathcal{N}(\mu_0,\Sigma_0),
\end{equation}
where $\mu_0 \in \Re^{n_x}$ is the initial state mean and $\Sigma_0 \in \Re^{n_x \times n_x}$ is the initial state covariance. 
We assume that $\Sigma_0 \succeq 0$. 
Our objective is to steer the trajectories of the system~(\ref{eq:SystemDynamics}) from this initial distribution to the terminal Gaussian distribution
\begin{equation}\label{eq:xN}
	x_N\sim\mathcal{N}(\mu_N,\Sigma_N),
\end{equation}
where $\mu_N \in \Re^{n_x}$ and $\Sigma_N \in \Re^{n_x \times n_x}$ with $\Sigma_N \succ 0$, at a given time $N$, while minimizing the cost function
\begin{equation}  \label{eq:originalObjFunc}
	J(x_{0:N-1},u_{0:N-1}) = \mathbb{E}\left[\sum_{k=0}^{N-1}x_k^\top Q_k x_k + u_k^\top R_k u_k\right],
\end{equation}
where $x_{0:N-1}$ is the state sequence $x_0, \ldots, x_{N-1}$, $u_{0:N-1}$ is the control sequence $u_0, \ldots, _{N-1}$, and the matrices $Q_k \succeq 0$ and $R_k \succ 0$ for all $k=0,1,\ldots,N-1$.
Note that (\ref{eq:originalObjFunc}) does not include a terminal cost owing to the terminal constraint~(\ref{eq:xN}).

The objective is to compute the optimal control input sequence $u_0, u_1, \ldots, u_{N-1}$, which ensures that the probability of  the state violation at any given time is below a pre-specified threshold, that is, 
\begin{equation}\label{eq:originalChanceConstraints}
	\texttt{Pr}(x_k \notin \chi) \leq P_{\rm{fail}},\qquad k=0,\ldots,N-1, 
\end{equation}
where $\texttt{Pr}(\cdot)$ denotes the probability of an event, $\chi \subset \Re^{n_x}$ is the state constraint set, and $P_{\rm{fail}} \in [0, 1]$ is the threshold for the probability of failure. 
Optimization problems with these types of constraints are known as chance-constrained optimization problems~\cite{mesbah2016stochastic}.
Note that, unlike the problem setup in \cite{okamoto2018Optimal}, the set $\chi$ in the current work may be \emph{non-convex} owing to the presence of several obstacles, i.e.,
\begin{equation}\label{eq:non_convexX}
	\chi = \chi_\Omega \backslash \left( \cup_{j = 1}^{N_\mathrm{obs}}\chi_j \right),
\end{equation}
where $\chi_\Omega, \chi_1, \ldots, \chi_{N_{\mathrm{obs}}} \subset \Re^{n_x}$ are convex polytopes and $N_\mathrm{obs}$ is the number of obstacles.

\begin{Remark}
	System (\ref{eq:SystemDynamics}) is assumed to be controllable in the absence of (\ref{eq:non_convexX}), that is, $x_N$ is reachable from $x_0$ for any $x_N \in \Re^{n_x}$, provided that $w_k = 0$ for $k = 0,\ldots,N-1$. This assumption implies that given any $x_N \in \Re^{n_x}$ and $x_0 \in \Re^{n_x}$, there exists a sequence of bounded control inputs $\{u_k\}_{k=0}^{N-1}$ that steers $x_0$ to $x_N$ in the absence of disturbances and obstacles.
\end{Remark}

\subsection{Preliminaries \label{subsec:Preliminaries}}

We provide an alternative description of the system dynamics in (\ref{eq:SystemDynamics}), which will be instrumental for solving the problem.
%The discussion below is borrowed from~\cite{goldshtein2017finite}.
At each time step $k$, we explicitly compute the system state $x_k$ as follows. 
Let $A_{k_1,k_0}$, $B_{k_1,k_0}$, and $D_{k_1,k_0}$, where $k_1>k_0$, denote the transition matrices of the state, input, and the noise term from step $k_0$ to step $k_1+1$, respectively, as
$A_{k_1,k_0} = A_{k_{1}}A_{k_{1}-1}\cdots A_{k_{0}}$, $B_{k_1,k_0} = A_{k_{1},k_{0}+1}B_{k_0}$, $D_{k_1,k_0} = A_{k_{1},k_{0}+1}D_{k_0}$.
We define the concatenated vectors $U_{k} = [u_{0}^\top \ u_{1}^\top \ \ldots \ u_{k}^\top]^\top \in \Re^{(k+1)n_u}$ and $	W_{k} = [w_{0}^\top \ w_{1}^\top\ \ldots \ w_{k}^\top]^\top \in \Re^{(k+1)n_w}$
Then $x_k$ can be equivalently computed from $x_k = \bar{A}_k x_0 + \bar{B}_k U_k + \bar{D}_k W_k$, where $\bar{A}_k = A_{k-1,0}$, $\bar{B}_k = [B_{k-1,0}\ B_{k-1,1}\ \cdots\ B_{k-1}]$, and $\bar{D}_k = [D_{k-1,0}\ D_{k-1,1}\ \cdots \ D_{k-1}]$.
Furthermore, we introduce the augmented state vector $X_k = [x_{0}^\top\ x_{1}^\top\ \ldots\ x_{k}^\top ]^\top \in \Re^{(k+1) n_x}$.
It follows that the system dynamics~(\ref{eq:SystemDynamics}) take the equivalent form
\begin{equation}\label{eq:convertedStateDynamics}
X = \ScriptA x_{0} + \ScriptB U+\ScriptD W, 
\end{equation}
where $X = X_{N}\in \Re^{(N+1) n_x} $, $U = U_{N-1}\in\Re^{Nn_u}$, and $W = W_{N-1}\in \Re^{Nn_w}$,
and the matrices $\ScriptA\in\Re^{(N+1)n_x\times n_x}$, $\ScriptB \in\Re^{(N+1)n_x\times Nn_u}$, and $\ScriptD \in\Re^{(N+1)n_x\times Nn_w}$ are defined accordingly.
%\begin{subequations}\label{eq:BigABD}
%	\begin{equation}
%	\ScriptA = \begin{bmatrix}
%	I \\
%	\bar{A}_1\\
%	\bar{A}_2\\
%	\vdots\\
%	\bar{A}_{N}
%	\end{bmatrix},
%	\quad
%	\ScriptB = \begin{bmatrix}
%	0 & 0 & \cdots & 0\\
%	B_0& 0 & \cdots & 0\\
%	B_{1,0}& B_1 & \cdots & 0\\
%	\vdots & \vdots & \vdots & \vdots\\
%	B_{N-1,0}& B_{N-1,1} & \cdots & B_{N-1}
%	\end{bmatrix},
%	\end{equation}
%	and
%	\begin{equation}
%	\ScriptD = \begin{bmatrix}
%	0 & 0 & \cdots & 0\\
%	D_0& 0 & \cdots & 0\\
%	D_{1,0}& D_1 & \cdots & 0\\
%	\vdots & \vdots & \vdots & \vdots\\
%	D_{N-1,0}& D_{N-1,1} & \cdots & D_{N-1}
%	\end{bmatrix}.
%	\end{equation}
%\end{subequations}
Note that 
%\begin{subequations}\label{eq:x0x0x0WWW}
%	\begin{align}  
%	\mathbb{E}[x_0x_0^\top] &= \Sigma_0 + \mu_0\mu_0^\top,\\
%	\mathbb{E}[x_0W^\top] &= 0, \\
%	\mathbb{E}[WW^\top] &= I_{Nn_w}.
%	\end{align}
%\end{subequations}
$\mathbb{E}[x_0x_0^\top] = \Sigma_0 + \mu_0\mu_0^\top$,
$\mathbb{E}[x_0W^\top] = 0$, and 
$\mathbb{E}[WW^\top] = I_{Nn_w}$.
Using $X$ and $U$, we may rewrite the cost function~(\ref{eq:originalObjFunc}) as 
\begin{equation}\label{eq:convertedObjFunc}
	J(X,U) = \mathbb{E}\left[X^\top \bar{Q} X + U^\top \bar{R} U \right],
\end{equation}
where $\bar{Q} = \texttt{blkdiag}(Q_0,\ldots,Q_{N-1},0)$ and $\bar{R} = \texttt{blkdiag}(R_0,\ldots,R_{N-1})$.
Since $Q_k\succeq0$ and $R_k\succ0$ for all $k = 0,\ldots,N-1 $, it follows that $\bar{Q}\succeq0$ and $\bar{R}\succ0$.
The boundary conditions~(\ref{eq:x0}) and (\ref{eq:xN}) also take the form 
\begin{subequations}
	\begin{align}
	\mu_0 &= E_0\mathbb{E}[X],\label{eq:mu0}\\
	\Sigma_0 &= E_0\left(\mathbb{E}[XX^\top] - \mathbb{E}[X]\mathbb{E}[X]^\top\right) E_0^\top,\label{eq:Sigma0}
	\end{align}\label{eq:X0}
\end{subequations}
and
\begin{subequations}
	\begin{align}
	\mu_N &= E_N\mathbb{E}[X],\label{eq:muN}\\
	\Sigma_N &= E_N\left(\mathbb{E}[XX^\top] - \mathbb{E}[X]\mathbb{E}[X]^\top\right) E_N^\top,\label{eq:SigmaN}
	\end{align}\label{eq:XN}
\end{subequations}
where $E_k \triangleq \left[0_{n_x,kn_x}, I_{n_x},0_{n_x,(N-k)n_x}\right]$. 
Finally, the chance constraints~(\ref{eq:originalChanceConstraints}) can be rewritten as
\begin{equation}  \label{eq:convertedChanceConstraints}
	\texttt{Pr}(E_k X \notin \chi) \leq P_{\rm{fail}}, \quad k = 0,\ldots,N-1.
\end{equation}

In summary, we solve the following problem.
\begin{problem}\label{prob:OriginalProblem}
	Given the system (\ref{eq:convertedStateDynamics}), find the control sequence $U^\ast = U_{N-1}^\ast$ that minimizes 
	the cost function Eq.~(\ref{eq:convertedObjFunc})  subject to the initial state constraints~(\ref{eq:X0}), the terminal state constraints~(\ref{eq:XN}), 
	and the chance constraint~(\ref{eq:convertedChanceConstraints}).
\end{problem}

\section{Optimal Covariance Control with Obstacles\label{sec:ProposedApproach}}
This section introduces the proposed approach to solve Problem \ref{prob:OriginalProblem}.
%First, and in order to clarify the changes from our previous approach, we briefly provide an overview of the approach introduced in \cite{okamoto2018Optimal}.
%For a more detailed discussion, including proofs, we refer the readers to~\cite{okamoto2018Optimal}.
First, we introduce a computationally more efficient approach than the one in~\cite{okamoto2018Optimal}.
We then introduce how to deal with non-convex state chance constraints.

\subsection{New Covariance Steering Approach}\label{subsec:NewApproachVK}
In \cite{okamoto2018Optimal} we used the state values of all previous time steps, which did not utilize the Markov property of the system dynamics.
Thus, if the time horizon is long, large memory is required.
In this section, we propose a computationally more efficient approach.
The main result is given in the following theorem.
\begin{Theorem}\label{theorem:NewCovSteer}
	The following control sequence $U = [u_0^\top, u_1^\top,\ldots, u_{N-1}^\top]^\top$, where
	\begin{equation}
	u_k = v_k + K_k y_k,\label{eq:u_k=V + K_ky_k}
	\end{equation}
	where $v_k \in \Re^{n_u}$, $K_k \in \Re^{n_u \times n_x}$, and $y_k \in \Re^{n_x}$ is given by  
	\begin{subequations}
		\begin{align}
		y_{k+1} &= A_k y_k + D_k w_k,\label{eq:y_DynamicsExplicit}\\
		y_0 &= x_0 - \mu_0,\label{eq:y_IntExplicit}
		\end{align}
	\end{subequations}
	results in a convex programming formulation of Problem \ref{prob:OriginalProblem}.
\end{Theorem}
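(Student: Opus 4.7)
The plan is to substitute the parametrization $u_k = v_k + K_k y_k$ into the concatenated dynamics (\ref{eq:convertedStateDynamics}) and verify that every ingredient of Problem \ref{prob:OriginalProblem} (cost, boundary conditions, chance constraints) becomes convex in the new decision variables $V = [v_0^\top, \ldots, v_{N-1}^\top]^\top$ and $\mathbf{K} = \texttt{blkdiag}(K_0, \ldots, K_{N-1})$. The central observation exploited throughout is that, by (\ref{eq:y_DynamicsExplicit})--(\ref{eq:y_IntExplicit}), the auxiliary process $y_k$ evolves according to the \emph{uncontrolled} dynamics with $\mathbb{E}[y_k] = 0$ and covariance independent of the decision variables; it is therefore a purely stochastic, control-independent quantity, so feeding it back through $K_k$ does not create any coupling between $V$ and $\mathbf{K}$.

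First, I would concatenate (\ref{eq:y_DynamicsExplicit})--(\ref{eq:y_IntExplicit}) to express $Y = [y_0^\top, \ldots, y_{N-1}^\top]^\top$ as an affine function of $x_0 - \mu_0$ and $W$, so that $U = V + \mathbf{K} Y$. Substituting into (\ref{eq:convertedStateDynamics}) and taking expectations yields $\mathbb{E}[X] = \ScriptA \mu_0 + \ScriptB V$, which is affine in $V$ and independent of $\mathbf{K}$. The centered process $X - \mathbb{E}[X]$ is then linear in $\mathbf{K}$ and independent of $V$, so $\mathrm{Cov}(X)$ is a quadratic function of $\mathbf{K}$ alone. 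Consequently the mean boundary conditions (\ref{eq:mu0}) and (\ref{eq:muN}) become affine equalities in $V$, while (\ref{eq:Sigma0}) is automatically satisfied by construction of $y_0$.

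Second, I would handle the terminal covariance (\ref{eq:SigmaN}) and the cost. Since $\mathrm{Cov}(X)$ is quadratic in $\mathbf{K}$, the equality (\ref{eq:SigmaN}) is nonconvex; the standard workaround, following the covariance-steering literature, is to relax it to $\Sigma_N \succeq E_N \,\mathrm{Cov}(X)\, E_N^\top$ and apply a Schur complement to obtain an LMI that is linear in $\mathbf{K}$. The cost (\ref{eq:convertedObjFunc}) expands as
\begin{equation*}
J = \mathbb{E}[X]^\top \bar{Q} \,\mathbb{E}[X] + V^\top \bar{R} V + \trace\!\left(\bar{Q}\,\mathrm{Cov}(X)\right) + \trace\!\left(\bar{R}\, \mathbf{K}\,\mathrm{Cov}(Y)\,\mathbf{K}^\top\right),
\end{equation*}
where the first two terms are convex quadratics in $V$ (since $\bar{Q}, \bar{R} \succeq 0$ and $\mathbb{E}[X]$ is affine in $V$), and the last two are convex quadratics in $\mathbf{K}$ (they are traces of PSD-weighted Gramians built from the affine-in-$\mathbf{K}$ expressions for $\mathrm{Cov}(X)$ and $\mathbf{K}\,\mathrm{Cov}(Y)\,\mathbf{K}^\top$).

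Third, the chance constraints (\ref{eq:convertedChanceConstraints}) will be rewritten, via Boole's inequality in the spirit of \cite{blackmore2009convex} and the later subsection of this paper, as deterministic inequalities on $E_k \mathbb{E}[X]$ tightened by terms of the form $\sqrt{a^\top E_k \,\mathrm{Cov}(X)\, E_k^\top a}$, which are second-order-cone constraints convex in $(V, \mathbf{K})$. Putting the three steps together gives a convex (in fact, mixed SOCP--SDP) reformulation of Problem \ref{prob:OriginalProblem}. The main obstacle is justifying the covariance-inequality relaxation: one must check that replacing the nonconvex equality (\ref{eq:SigmaN}) with $\Sigma_N \succeq E_N \,\mathrm{Cov}(X)\, E_N^\top$ preserves optimality, so that the resulting convex program genuinely solves the original problem; this follows from the observation that $\bar{Q} \succeq 0$ renders any feasible point with strictly smaller terminal covariance at least as good, combined with the Markov structure of the $y_k$-parametrization which is what makes the decoupling between $V$ and $\mathbf{K}$ clean in the first place.
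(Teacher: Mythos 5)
Your proposal is correct and follows essentially the same route as the paper: parametrize $U = V + KY$ with $Y = \ScriptA y_0 + \ScriptD W$ control-independent, observe that the mean is affine in $V$ and the covariance depends only on $K$, relax the terminal covariance equality to a semidefinite inequality, and reduce the chance constraints to second-order cone constraints via Boole's inequality. The only cosmetic differences are that the paper writes the relaxed covariance constraint as a norm inequality rather than a Schur-complement LMI, and it does not attempt the optimality-preservation argument for the relaxation that you sketch (which, as stated, is not quite airtight, since a relaxed solution may undershoot the prescribed $\Sigma_N$); neither affects the convexity claim being proved.
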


\begin{proof}
	The control sequence $U$ can be represented as $U = V + K Y$, where $V = [v_0^\top,\ldots, v_{N-1}^\top]^\top \in \Re^{Nn_u}$, $K\in\Re^{Nn_u \times (N+1) n_x}$, and $Y= [ y_0^\top,\ldots, y_{N}^\top ]^\top \in \Re^{(N+1)n_x}$.
	The new state $y_k$ is governed by the dynamics~(\ref{eq:y_DynamicsExplicit}) with initial condition~(\ref{eq:y_IntExplicit}). 
	Thus, using the matrices introduced in Section \ref{subsec:Preliminaries}, $Y$ can be written as $Y = \ScriptA y_0 + \ScriptD W$.
	Therefore, 
	\begin{equation}\label{eq:U=V+KAyDW}
		U = V + K \left(\ScriptA y_0 + \ScriptD W\right). 
	\end{equation}
	It follows from $\Expectation[y_0] = 0$ and $\Expectation[W] = 0$ that $\Expectation[U] = V.$
	Thus, it follows from (\ref{eq:convertedStateDynamics}) that
	\begin{align}
		\bar{X} &= \Expectation[X] = \ScriptA \mu_0 + \ScriptB V,\label{eq:barXVK}\\
		\tilde{X} &= X -\Expectation[X] = \ScriptA (x_0 - \mu_0) + \ScriptB (U-V) + \ScriptD W,\nonumber \\
		          &= (I + \ScriptB K )\left(\ScriptA y_0 + \ScriptD W\right).\label{eq:tildeXVK}
	\end{align}
	According to~\cite{okamoto2018Optimal}, the cost function~(\ref{eq:convertedObjFunc}) may be converted into $J(\bar{X},\tilde{X},V,\tilde{U}) = \trace(\bar{Q}\Expectation[\tilde{X}\tilde{X}^\top]) + \bar{X}^\top \bar{Q} \bar{X}
		+ \trace(\bar{R}\Expectation[\tilde{U}\tilde{U}^\top]) + V^\top \bar{R} V$,
	where $\tilde{U} = U - V$.
	It follows from $\Expectation[y_0y_0^\top] = \Sigma_0$, $\Expectation[y_0W^\top] = 0$, and $\Expectation[WW^\top] = I_{Nn_w}$ that 
%	\begin{align}
		$\Expectation[\tilde{X}\tilde{X}^\top] = (I + \ScriptB K )\left(\ScriptA \Sigma_0 \ScriptA^\top + \ScriptD\ScriptD^\top\right)(I + \ScriptB K )^\top$ and
		$\Expectation[\tilde{U}\tilde{U}^\top] = K\left(\ScriptA\Sigma_0\ScriptA^\top + \ScriptD\ScriptD^\top\right)K^\top$.
%	\end{align}
	Therefore, the cost function is further converted to the following quadratic form in terms of $V$ and $K$:
	\begin{align}
		&J(V,K) = \nonumber\\
		&\trace\left(\left((I+\ScriptB K)^\top\bar{Q}(I + \ScriptB K ) + K^\top\bar{R}K\right)\left(\ScriptA \Sigma_0 \ScriptA^\top + \right.\right. \nonumber\\
		 &\left.\left.\ScriptD\ScriptD^\top\right)\right) + (\ScriptA \mu_0 + \ScriptB V)^\top\bar{Q}(\ScriptA \mu_0 + \ScriptB V) + V^\top\bar{R}V.\label{eq:costFunctionVandK}
 	\end{align}
 
	In addition, the terminal state constraints~(\ref{eq:XN}) can be formulated as
	\begin{subequations}
		\begin{align}
		\mu_N &= E_N\left(\ScriptA \mu_0 + \ScriptB V\right),\label{eq:muNNew}\\
		\Sigma_N &= E_N (I + \ScriptB K )\left(\ScriptA \Sigma_0 \ScriptA^\top + \ScriptD\ScriptD^\top\right)(I + \ScriptB K )^\top E_N^\top.
		\end{align}\label{eq:XNNew}
	\end{subequations}
	Note that $V$ steers the mean and $K$ steers the covariance to the pre-specified values $\mu_N$ and $\Sigma_N$, respectively. 
	As we discussed earlier, in order to reformulate Problem~\ref{prob:OriginalProblem} as a convex programming problem, we relax the covariance equality constraint~(\ref{eq:XNNew}) to an inequality constraint. 
	Thus, $\Sigma_N \succeq E_N (I + \ScriptB K )\left(\ScriptA \Sigma_0 \ScriptA^\top + \ScriptD\ScriptD^\top\right)(I + \ScriptB K )^\top E_N^\top$, 
	which leads to
	\begin{equation} \label{eq:XNNewIneq}
	1 - \| (\mathcal{A}\Sigma_0\mathcal{A}^\top+\mathcal{D}\mathcal{D}^\top)^{1/2}(I+\mathcal{B}K)^\top E_N^\top\Sigma_N^{-1/2}  \|_2 \geq 0.
	\end{equation}
	
	Finally, the chance constraint~(\ref{eq:convertedChanceConstraints}) can be formulated as follows. 
	Assuming $\chi$ is defined as the intersection of $M$ linear inequality constraints $\chi \triangleq \bigcap_{j=1}^M \{x:\alpha_j^\top x \leq \beta_j\}$
	where $\alpha_j \in \Re^{n_x}$ and $\beta_j \in \Re$, the chance constraint~(\ref{eq:convertedChanceConstraints}) is converted to $\texttt{Pr}(\alpha_j^\top E_k X > \beta_j) \leq p_j$ and $\sum_{j=1}^{M}p_j \leq P_{\rm{fail}}$.
	It follows from (\ref{eq:barXVK}) and (\ref{eq:tildeXVK}) that $\alpha_j^\top E_k X$ is a univariate Gaussian random variable such that $\alpha_j^\top E_kX \sim \mathcal{N}(\alpha_j^\top E_k \bar{X}, \alpha_j^\top E_k \Sigma_X E_k^\top \alpha_j)$, where 
	\begin{align}\label{eq:Sigma_X}
		\Sigma_X = (I+\ScriptB K)(\ScriptA\Sigma_0\ScriptA^\top + \ScriptD\ScriptD^\top)(I+\ScriptB K)^\top.
	\end{align}
	Thus, it is straightforward from the discussion in~\cite{okamoto2018Optimal} to derive the following inequality constraint 
	\begin{align}
		&\alpha_j^\top E_k\left(\ScriptA \mu_0 + \ScriptB V\right)+ \|(\mathcal{A}\Sigma_0\mathcal{A}^\top+\mathcal{D}\mathcal{D}^\top)^{1/2} \nonumber \\
		&(I+\mathcal{B}K)^\top E_k^\top\alpha_j\| \Phi^{-1}\left(1-p_{j,\rm{fail}}\right) - \beta_j \leq 0, \label{eq:deterministicChanceConstraintsOursNew}
	\end{align}
	where $p_{j, {\rm fail}}$ is a pre-specified value.
	
	In summary, by introducing (\ref{eq:u_k=V + K_ky_k}), we have converted Problem~\ref{prob:OriginalProblem} into the following convex programming problem.
\end{proof}	
\begin{problem}\label{prob:FasterProblem}
Given the system state sequence Eqs.~(\ref{eq:barXVK}) and (\ref{eq:tildeXVK}), find $V$ and $K$ that minimizes 
the cost function Eq.~(\ref{eq:costFunctionVandK})  subject to the terminal state constraints~(\ref{eq:muNNew}) and (\ref{eq:XNNewIneq}), 
and the chance constraint~(\ref{eq:deterministicChanceConstraintsOursNew}) with pre-specified probability thresholds $p_{j,\rm{fail}}$.
\end{problem}

\begin{Remark}
	The new control design strategy based on Theorem~\ref{theorem:NewCovSteer} is computationally more efficient than the one proposed in~\cite{okamoto2018Optimal}.
	Specifically, the vector $V$ and the matrix $K$ in (\ref{eq:U=V+KAyDW}) are a full vector and a block diagonal matrix.
	Thus, the number of entries one needs to compute is $\mathcal{O}(n_xn_uN)$.
	In contrast, the matrix $K$ in~\cite{okamoto2018Optimal} has $\mathcal{O}(N^2n_xn_u)$
	entries.
\end{Remark}
\begin{Remark}
	$y_k$ is the uncontrolled ($u_k \equiv 0$) state of (\ref{eq:SystemDynamics}) with a different initial condition~(\ref{eq:y_IntExplicit}).
\end{Remark}
\begin{Remark}
	It is possible from (\ref{eq:costFunctionVandK}) to separately design the mean and covariance steering cost matrices as
	\begin{align}
	&J(V,K) = \trace\left(\left((I+\ScriptB K)^\top\bar{Q}_\mathrm{cov}(I + \ScriptB K ) + K^\top\bar{R}_\mathrm{cov}K\right)\right.\nonumber \\
	&\left.\left(\ScriptA \Sigma_0 \ScriptA^\top + \ScriptD\ScriptD^\top\right)\right) + (\ScriptA \mu_0 + \ScriptB V)^\top\bar{Q}_\mathrm{mean}(\ScriptA \mu_0 + \ScriptB V) \nonumber \\ 
	&\hspace{20pt} + V^\top\bar{R}_\mathrm{mean}V.\label{eq:costFunctionVandKSeparate}
	\end{align}
%	where $\bar{Q}_\mathrm{mean} \in \Re^{(N+1)n_x\times(N+1)n_x}$, $\bar{Q}_\mathrm{cov} \in \Re^{(N+1)n_x\times(N+1)n_x}$, $\bar{R}_\mathrm{mean} \in \Re^{Nn_u\times Nn_u}$, and $\bar{R}_\mathrm{cov} \in \Re^{Nn_u\times Nn_u}$. 
\end{Remark}

\subsection{Optimal Covariance Control with Obstacles}
In this section, we propose a new approach to efficiently deal with non-convex state chance constraints.
As discussed in Eq.~(\ref{eq:non_convexX}), and unlike \cite{okamoto2018Optimal}, we assume that the state space has several convex obstacles. 
In Section~\ref{sec:Introduction} we discussed numerous approaches that have been proposed to deal with path planning problems with obstacles.
However, most of them only consider the deterministic system and do not steer the covariance, which may lead to unnecessarily conservative solutions as we demonstrate later in Section~\ref{sec:Numerical Simulation} via numerical examples.
By steering the state covariance along with the mean, we mitigate the conservativeness of the ensuing path.
We first introduce an approach to deal with the non-convex state chance constraints~(\ref{eq:convertedChanceConstraints}) and (\ref{eq:non_convexX}).
Then, we convert the original Problem~\ref{prob:OriginalProblem} into a mixed-integer convex programming problem.

We represent the non-convex set of obstacle-free states as the union of a finite number of (possibly overlapping) \emph{convex} regions $\mathcal{R}_r$.
Specifically, we represent $\chi$ in (\ref{eq:non_convexX}) as 
\begin{equation} \label{eq:NonConvexDecomposed}
	\chi = \bigcup_{r=0}^{N_R-1}\mathcal{R}_r,
\end{equation}
where $N_R$ is the number of convex regions. 
We assume that each convex region $\mathcal{R}_r$ is a polytope and can be represented as the intersection of multiple linear inequality constraints as follows.
\begin{equation}\label{eq:R_rDef}
	\mathcal{R}_r \triangleq \bigcap_{q = 0}^{M_r-1} \{x:\alpha_{r,q}^\top x \leq \beta_{r,q}\},
\end{equation}
where $\alpha_{r,q} \in \Re^{n_x}$ and $\beta_{r,q} \in \Re$. 
Let the  Boolean matrix $\mathcal{M}\in\{0,1\}^{N_R \times (N-1)}$, where $\mathcal{M}_{r,k} = 1$ implies that the state at time steps $k$ and $k+1$ belongs to region $\mathcal{R}_r$ with high probability. 
Note that, because of the noise, the state constraints need to be probabilistically formulated, i.e., using chance constraints.
Namely, we impose that 
\begin{equation} \label{eq:M->EkXinRr}
	\mathcal{M}_{r,k} = 1 \Rightarrow \mathtt{Pr}\left(x_k \notin \mathcal{R}_r\right) < \epsilon ~\mathrm{and}~\mathtt{Pr}\left(x_{k+1} \notin \mathcal{R}_r\right) < \epsilon,
\end{equation}
where $0 < \epsilon \ll 1$.
In order to ensure that, with high probability, the state is collision-free at time step $k$, we use the following equality constraint
\begin{equation}\label{eq:condOnM}
	\sum_{r=0}^{N_R-1}\mathcal{M}_{r,k} = 1.
\end{equation}
Note that, as there can be overlaps between regions, the state variables at steps $k$ and $k+1$ can belong to multiple regions.
Thus, the implication in (\ref{eq:M->EkXinRr}) is only one directional \cite{deits2015efficient}.

%\begin{Remark}
%	The cell-decomposition approach \cite{cowlagi2012hierarchical} also partitions the non-convex environment into convex regions (cells).
%	However, while the cell-decomposition approach does not allow overlapping cells, the approach we use here allows for the regions to overlap.
%\end{Remark}

Next, we prove that the right-hand side of (\ref{eq:M->EkXinRr}) can be formulated as a constraint in a mixed-integer convex programming problem.
\begin{Lemma}
	Given $\mathcal{R}_r$ in (\ref{eq:R_rDef}), the condition
	$\mathtt{Pr}\left(x_k \notin \mathcal{R}_r\right) < \epsilon$ and $\mathtt{Pr}\left(x_{k+1} \notin \mathcal{R}_r\right) < \epsilon$,
	where the state $x_k =E_k (\bar{X} + \tilde{X})= E_k(\ScriptA \mu_0 + \ScriptB V +  (I + \ScriptB K)(\ScriptA y_0 + \ScriptD W))$, 
	is a second-order cone constraint in $V$ and $K$:
\end{Lemma}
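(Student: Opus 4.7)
The plan is to reduce the two joint chance constraints on the polytopic region $\mathcal{R}_r$ to a finite collection of individual halfspace chance constraints, and then show that each of those reduces to the form used in the previous subsection, namely the SOC inequality~(\ref{eq:deterministicChanceConstraintsOursNew}). The reasoning from \cite{okamoto2018Optimal} that was invoked after the display~(\ref{eq:Sigma_X}) applies essentially verbatim; the lemma is really about verifying that the same machinery survives when the constraint set is a single convex polytope of a non-convex decomposition rather than the whole admissible set.

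First, I would note that for each $r$, $x_k \in \mathcal{R}_r$ if and only if $\alpha_{r,q}^\top x_k \leq \beta_{r,q}$ for every $q \in \{0,\ldots,M_r-1\}$. Hence $\{x_k \notin \mathcal{R}_r\} = \bigcup_{q=0}^{M_r-1}\{\alpha_{r,q}^\top x_k > \beta_{r,q}\}$, and Boole's inequality gives $\mathtt{Pr}(x_k \notin \mathcal{R}_r) \leq \sum_{q=0}^{M_r-1}\mathtt{Pr}(\alpha_{r,q}^\top x_k > \beta_{r,q})$. I would introduce per-halfspace risk allocations $\epsilon_{r,q,k} > 0$ and $\epsilon_{r,q,k+1} > 0$ with $\sum_q \epsilon_{r,q,k} \leq \epsilon$ and $\sum_q \epsilon_{r,q,k+1} \leq \epsilon$, and enforce $\mathtt{Pr}(\alpha_{r,q}^\top x_k > \beta_{r,q}) \leq \epsilon_{r,q,k}$ for each $q$, and similarly at time $k+1$. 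These allocations can either be fixed a priori (uniformly, $\epsilon/M_r$) or treated as additional decision variables.

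Next, I would use the fact that, from (\ref{eq:barXVK}) and (\ref{eq:tildeXVK}), $x_k = E_k X$ is a Gaussian vector. Therefore $\alpha_{r,q}^\top x_k$ is univariate Gaussian with mean $\alpha_{r,q}^\top E_k(\ScriptA\mu_0 + \ScriptB V)$ and variance $\alpha_{r,q}^\top E_k \Sigma_X E_k^\top \alpha_{r,q}$, where $\Sigma_X$ is given in~(\ref{eq:Sigma_X}). A standard Gaussian tail inversion then converts $\mathtt{Pr}(\alpha_{r,q}^\top x_k > \beta_{r,q}) \leq \epsilon_{r,q,k}$ into the deterministic inequality
\begin{align}
&\alpha_{r,q}^\top E_k(\ScriptA \mu_0 + \ScriptB V) \nonumber\\
&\quad + \Phi^{-1}(1-\epsilon_{r,q,k})\,\bigl\|(\ScriptA \Sigma_0 \ScriptA^\top + \ScriptD\ScriptD^\top)^{1/2}(I+\ScriptB K)^\top E_k^\top \alpha_{r,q}\bigr\|_2 \nonumber\\
&\hspace{100pt}\leq \beta_{r,q},
\end{align}
and the analogous inequality with $E_k$ replaced by $E_{k+1}$ for the constraint at step $k+1$.

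The key observation that finishes the lemma is structural: the left-hand side is an affine function of $V$ plus $\Phi^{-1}(1-\epsilon_{r,q,k})$ times the Euclidean norm of a vector that is affine in $K$. Since $\epsilon_{r,q,k} < 1/2$ implies $\Phi^{-1}(1-\epsilon_{r,q,k}) > 0$, the resulting inequality has the canonical form $a^\top z + \|Fz + g\|_2 \leq c^\top z + d$ and is therefore a second-order cone constraint in the decision variables $(V,K)$. Collecting these $2M_r$ SOC constraints (one per halfspace at $k$ and at $k+1$) yields the claim. The only delicate step is keeping track of the risk allocation: the Boole bound is what introduces conservatism, and one must ensure $\Phi^{-1}(1-\epsilon_{r,q,k})$ is well-defined and positive, which amounts to requiring $\epsilon < 1/2$ (easily satisfied in practice since $\epsilon \ll 1$). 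No other obstacle is present; the rest is the same reduction already used in the convex case.
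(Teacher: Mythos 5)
Your proof is correct and follows essentially the same route as the paper: decompose membership in the polytope $\mathcal{R}_r$ into its $M_r$ halfspace constraints, use the Gaussianity of $\alpha_{r,q}^\top x_k$ implied by (\ref{eq:barXVK}), (\ref{eq:tildeXVK}), and (\ref{eq:Sigma_X}) to invert each tail probability via $\Phi^{-1}$, and note that each resulting inequality is an affine function of $V$ plus a positive multiple of the Euclidean norm of a vector affine in $K$, hence a second-order cone constraint. The only point of divergence is that you are more careful than the paper about the union bound: the paper's (\ref{eq:HChanceConstraints}) imposes $\Phi^{-1}(1-\epsilon)$ on every halfspace separately, which via Boole's inequality only certifies $\mathtt{Pr}(x_k \notin \mathcal{R}_r) \leq M_r\epsilon$, whereas your per-halfspace risk allocation $\sum_q \epsilon_{r,q,k} \leq \epsilon$ actually delivers the stated bound $<\epsilon$ at the cost of the slightly larger quantile $\Phi^{-1}(1-\epsilon/M_r)$.
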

\begin{proof}
	It is easy to show, from the discussion in Section~\ref{subsec:NewApproachVK}, that the condition can be converted to 
	\begin{equation}
		\begin{cases}
			\alpha_{r,q}^\top E_k\left(\ScriptA \mu_0 + \ScriptB V\right)+ \|(\mathcal{A}\Sigma_0\mathcal{A}^\top+\mathcal{D}\mathcal{D}^\top)^{1/2}\\
			\hspace{5pt} (I+\mathcal{B}K)^\top E_k^\top\alpha_{r,q}\| \Phi^{-1}\left(1-\epsilon\right) - \beta_{r,q} \leq 0,\\
			\alpha_{r,q}^\top E_{k+1}\left(\ScriptA \mu_0 + \ScriptB V\right)+ \|(\mathcal{A}\Sigma_0\mathcal{A}^\top+\mathcal{D}\mathcal{D}^\top)^{1/2} \\
			\hspace{5pt}(I+\mathcal{B}K)^\top E_{k+1}^\top\alpha_{r,q}\| \Phi^{-1}\left(1-\epsilon \right) - \beta_{r,q} \leq 0,
		\end{cases}\label{eq:HChanceConstraints}
	\end{equation}
	where $q = 0,\ldots,M_r-1$. This condition is second-order cone in $V$ and $K$.
\end{proof}

Finally, we reformulate Problem \ref{prob:OriginalProblem} into the following mixed-integer convex programming problem summarized in Algorithm~\ref{alg:ProposedApproach}.
Note that, although MIP problems are, in general, NP-hard, many tools have been recently developed in the literature to efficiently solve such problems. 
In the following section, using simple numerical examples, we demonstrate that our problem setup can be efficiently solved with current MIP solvers.
Specifically, we use YALMIP~\cite{yalmip}, which uses a branch-and-bound algorithm to solve MIP problems, and bounding the design variables is helpful to find a solution. 
Thus, we introduce element-wise constraints on $V$ and $K$ (line 5).

%\begin{Problem}\label{prob:NonConvexProblem}
%	Given the system state sequence Eqs.~(\ref{eq:barXVK}) and (\ref{eq:tildeXVK}),	
%	find the control matrices $V$ and $K$ in Eq.~(\ref{eq:U=V+KAyDW}) that minimizes the objective function~(\ref{eq:costFunctionVandK}) subject to the terminal constraints~(\ref{eq:muNNew}) and (\ref{eq:XNNewIneq}), and the chance constraints~(\ref{eq:HChanceConstraints}) with pre-specified probability thresholds $\epsilon$ and the Boolean matrix $\mathcal{M}$ that satisfies (\ref{eq:condOnM}).
%\end{Problem}

\begin{algorithm}
\DontPrintSemicolon
\SetKwInOut{Input}{Input}
\SetKwInOut{Output}{Output}
\SetKwComment{Comment}{$\triangleright$\ }{}
\Input{$\mu_0$, $\Sigma_0$, $\mu_N$, $\Sigma_N$, $\epsilon$, $A_k,B_k,D_k$, $\alpha_{r,q}$, $\beta_{r,q}$. ($k=0,\ldots,N-1$, $r=0,\ldots,N_R-1$, $q = 0,\ldots,M_r-1$)}
\Output{$V^\ast$ and $K^\ast$}
	Find $V$ and $K$ in~(\ref{eq:U=V+KAyDW}) that minimizes (\ref{eq:costFunctionVandK}) or (\ref{eq:costFunctionVandKSeparate}) \\
	subject to \\
	\hspace{10pt}(\ref{eq:barXVK}) and (\ref{eq:Sigma_X}) \Comment{System Dynamics}
	\hspace{10pt}(\ref{eq:muNNew}) and (\ref{eq:XNNewIneq}) \Comment{Terminal Condition}
	\hspace{10pt}$V_\mathrm{min} \leq V \leq V_\mathrm{max}$, $K_\mathrm{min} \leq K \leq K_\mathrm{max}$\\
	\hspace{10pt}(\ref{eq:condOnM}) \Comment{Condition on $\mathcal{M}$}
	\hspace{10pt}$\mathcal{M}_{r,k} == 1$ $\Rightarrow$(\ref{eq:HChanceConstraints}) \Comment{Chance Constraints}
$K^\ast$ $\gets$ $K$. $V^\ast$ $\gets$ $V$.
\caption{Optimal Covariance Control}\label{alg:ProposedApproach}
\end{algorithm}

\section{Numerical Simulations\label{sec:Numerical Simulation}}
In this section we validate the proposed algorithm using simple numerical examples. 
We consider the path-planning problem for a vehicle under the following time invariant system dynamics with $x_k = [x,y,v_x,v_y]^\top\in \Re^{4}$, $u_k =[a_x,a_y]^\top\in \Re^{2}$, $w_k \in \Re^{4}$ and
\begin{equation}
	A = \begin{bmatrix}
	1 & 0 & \Delta t & 0\\
	0 & 1 & 0 & \Delta t\\
	0 & 0 & 1 & 0\\
	0 & 0 & 0 & 1
	\end{bmatrix},
	B = \begin{bmatrix}
	\Delta t^2/2 & 0\\
	0 &\Delta t^2/2\\
	\Delta t & 0\\
	0 & \Delta t
	\end{bmatrix},\label{eq:VehicleDynamics}
\end{equation}
and $D = 10^{-2}I_4$, where $\Delta t = 0.2$ is the time-step size. 
We use the cost function in (\ref{eq:costFunctionVandKSeparate}), and the cost function weights are chosen as 
$\bar{Q}_\mathrm{mean} = \mathtt{blkdiag}(Q_0,Q_1,\ldots,Q_{N-1},0)$ with $Q_k = \mathtt{diag}(0.5,4.0,0.05,0.05)$, $\bar{Q}_\mathrm{cov} = 0$,
$\bar{R}_\mathrm{mean} = \mathtt{blkdiag}(R_0,R_1,\ldots,R_{N-1})$ with $R_k = \mathtt{diag}(20,20)$, and 
$\bar{R}_\mathrm{cov} = \mathtt{blkdiag}(R_{\mathrm{cov},0},R_{\mathrm{cov},1},\ldots,R_{\mathrm{cov},N-1})$ with $R_{\mathrm{cov},k} = \mathtt{diag}(200,200)$ for $k=0,\ldots,N-1$.
We also set the horizon to $N=20$ and the probability threshold to $\epsilon = 1e-3$.
We used YALMIP~\cite{yalmip} with MOSEK~\cite{mosek} to solve the relevant optimization problems.
In order to reduce the search space and the computational time, we restrict the following element-wise inequality constraint to the control vector and gain matrix
$-10 \leq K \leq 10$, $-100 \leq V \leq 100$.
\subsection{Double Slit}\label{subsec:Scenario2}
We consider the case illustrated in Fig.~\ref{fig:2ProblemSetupWide}, where we find the trajectory to go through a ``slit.''
The red circle denotes the $3\sigma$ error of the initial state distribution of the $x$ and $y$ coordinates.
The magenta circle denotes the $3\sigma$ error of the terminal state distribution of $x$ and $y$ coordinates.
The initial condition is $\mu_0 = [-10, 0.1, 0, 0]$ and $\Sigma_0 = \mathtt{diag}(0.05,0.05,0.001,0.001)$,
while the terminal constraint is $\mu_N = [0, 0, 0, 0]$ and $\Sigma_N = \mathtt{diag}(0.01,0.01,0.001,0.001)$.
First, for comparison, we conduct only mean steering as shown in Fig.~\ref{fig:Problem2WideOL}.
This result is obtained by imposing $K\equiv0$ in (\ref{eq:U=V+KAyDW}). 
In this case, the covariance is not controlled, and thus, the terminal covariance constraint cannot be satisfied.
Therefore, the terminal covariance constraint (\ref{eq:XNNewIneq}) is not imposed.
Since the initial covariance is large, and as the covariance grows, it is impossible for the mean steering controller to guide the trajectory through the top slit, and the path has to go through the larger but further away slit.
Figure~\ref{fig:Problem2WideCL} illustrates the result of the proposed approach.
Although the top slit is narrower than the bottom one, the controller shrinks the covariance and successfully computes an optimal path. 
In order to compute this path, we used the rectangular-shaped sub-regions shown in Fig.~\ref{fig:Ex2WideRegion}.
Although this division was conducted manually, algorithms are available to automatically represent the entire feasible region as a union of convex polytopes \cite{deits2015computing}.

We also conducted a similar numerical simulation with slightly different setup, where the top slit is much narrower.
As illustrated in Fig.~\ref{fig:2ProblemSetupNarrow}, the algorithm found that the cost of shrinking the error ellipses and going through the top slit is higher than going through the lower slit as shown in Fig.~\ref{fig:2SolutionNarrow}.
%In order to compute this path, we used the rectangular-shaped sub-regions shown in Fig.~\ref{fig:Ex2NarrowRegion}.
\begin{figure}
	\centering
	\subfloat[Problem setup.\label{fig:2ProblemSetupWide}]{\centering\includegraphics[width=0.5\columnwidth]{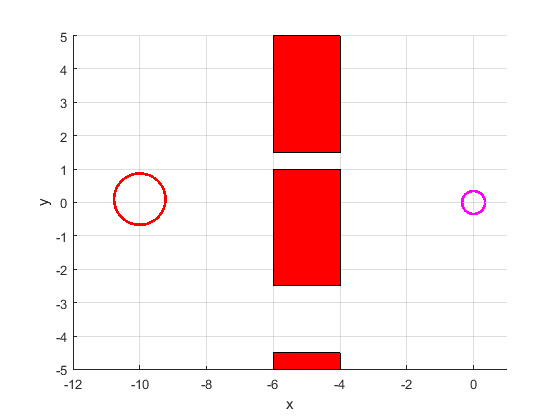}}\\
	\subfloat[Mean steering.\label{fig:Problem2WideOL}]{\centering\includegraphics[width=0.5\columnwidth]{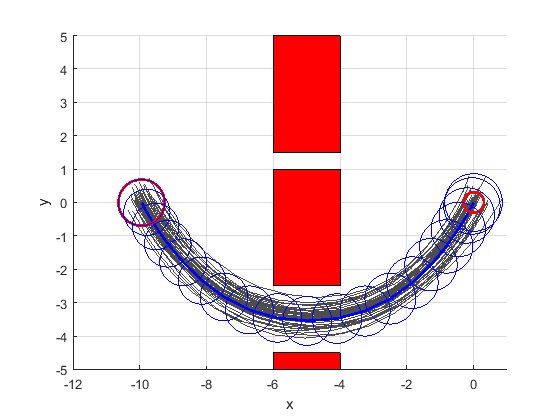}}
	\subfloat[Proposed approach.\label{fig:Problem2WideCL}]{\centering\includegraphics[width=0.5\columnwidth]{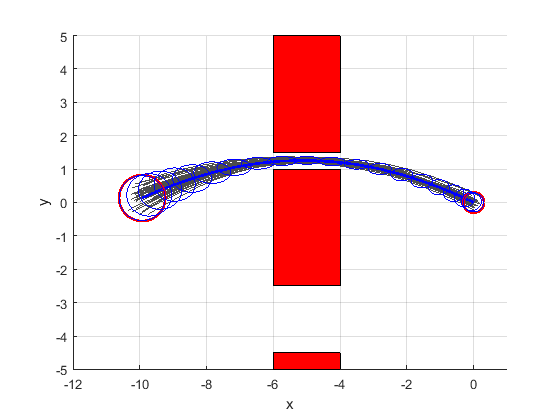}}
	\caption{Problem setup and solutions.}
\end{figure}

\begin{figure}
	\centering
	\subfloat[Region 1]{\centering\includegraphics[width=0.5\columnwidth]{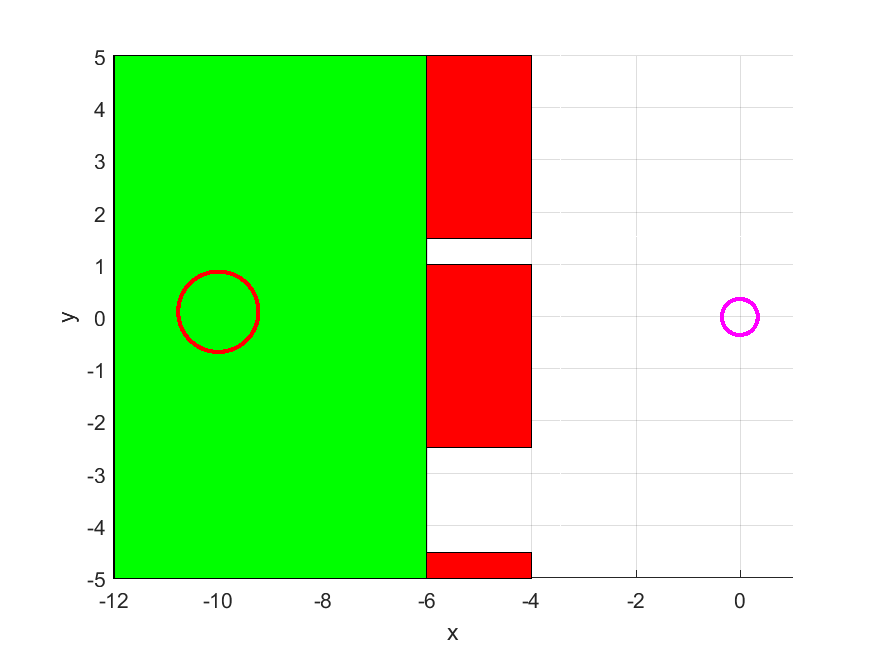}}
	\subfloat[Region 2]{\centering\includegraphics[width=0.5\columnwidth]{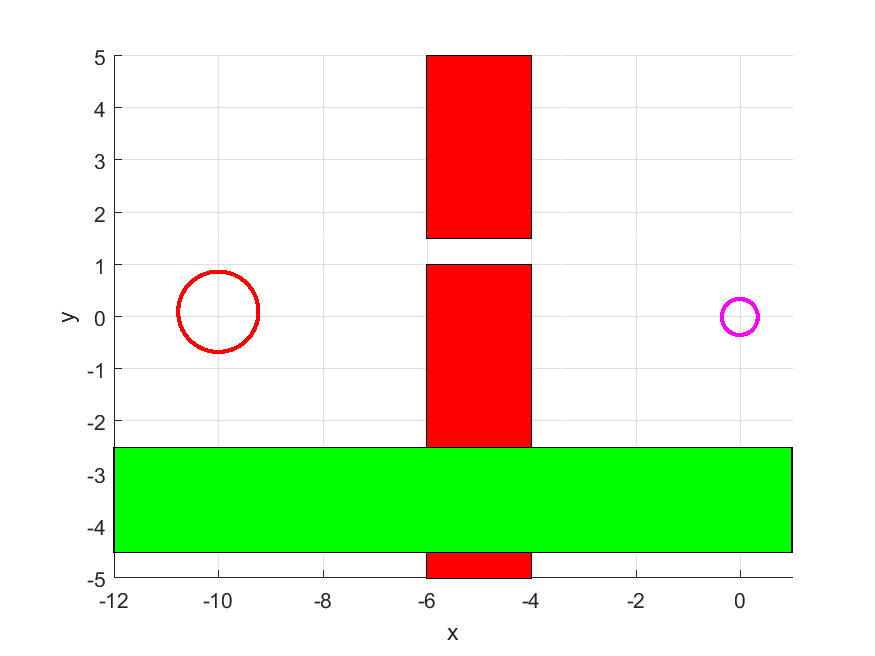}}\\
	\subfloat[Region 3]{\centering\includegraphics[width=0.5\columnwidth]{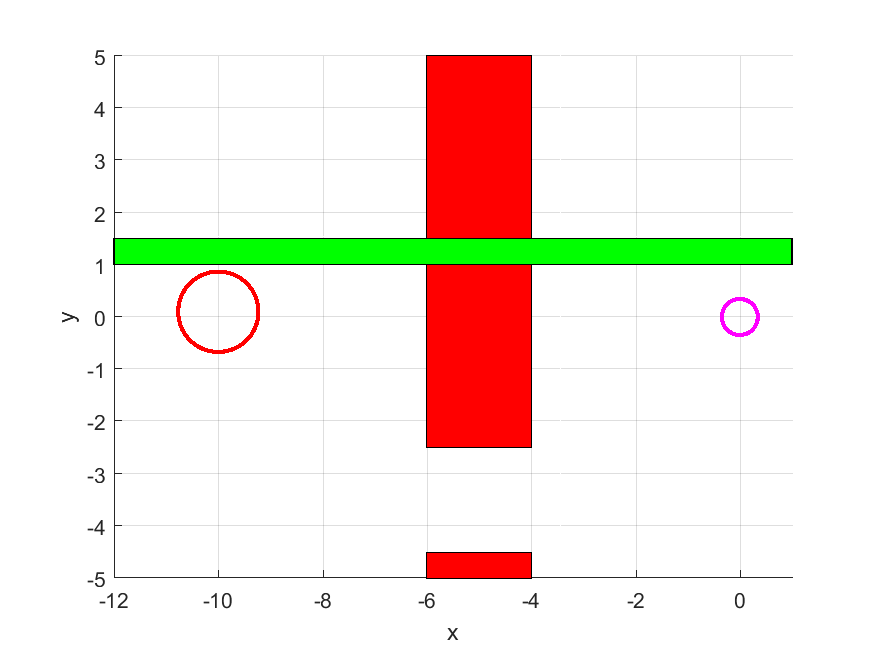}}
	\subfloat[Region 4]{\centering\includegraphics[width=0.5\columnwidth]{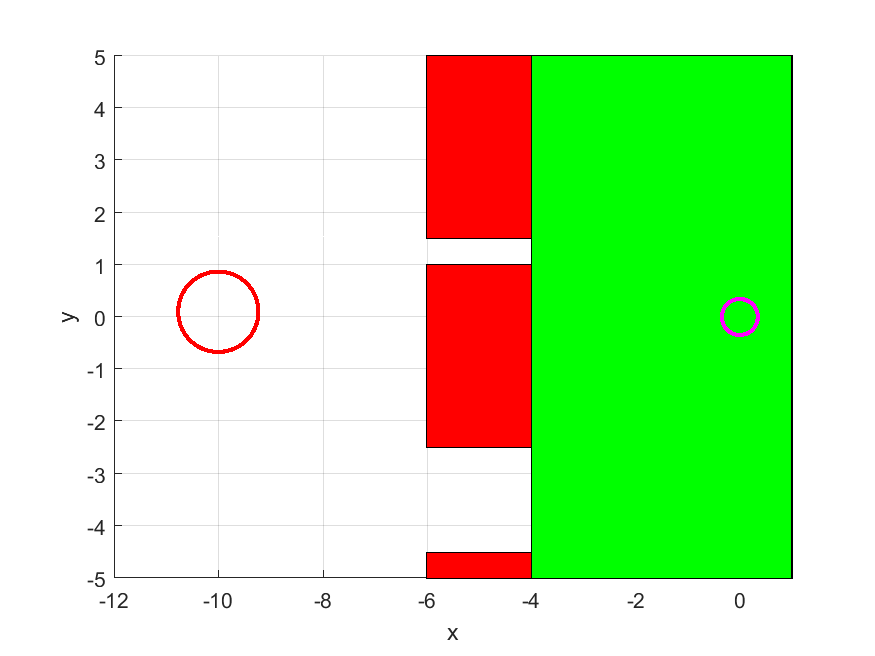}}
	\caption{Representation of the feasible region.\label{fig:Ex2WideRegion}}
\end{figure}

\begin{figure}
	\centering
	\subfloat[Problem setup.\label{fig:2ProblemSetupNarrow}]{\centering\includegraphics[width=0.5\columnwidth]{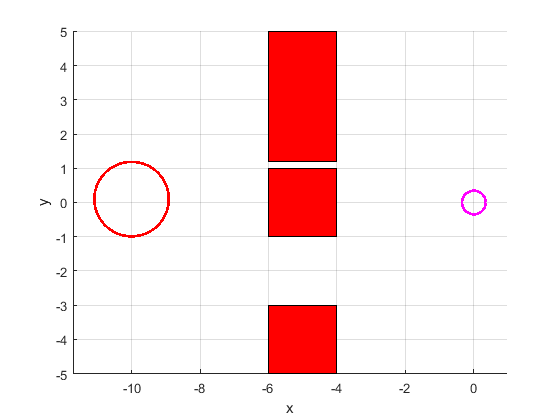}}
	\subfloat[Solution of the proposed approach.\label{fig:2SolutionNarrow}]{\centering\includegraphics[width=0.5\columnwidth]{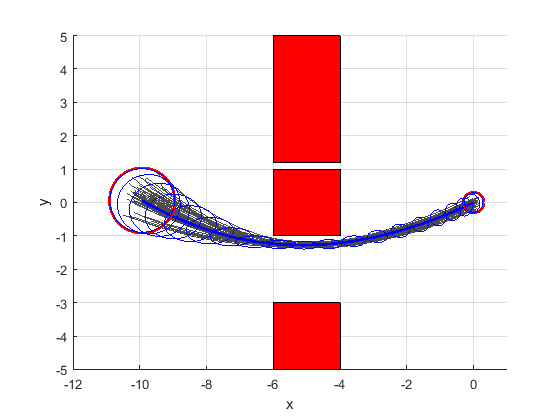}}
	\caption{Problem setup and solution.}
\end{figure}
%\begin{figure}
%	\centering
%	\subfloat[Region 1]{\centering\includegraphics[width=0.3\columnwidth]{Figs/NonConvexRegionBased2Narrow1}}
%	\subfloat[Region 2]{\centering\includegraphics[width=0.3\columnwidth]{Figs/NonConvexRegionBased2Narrow2}}\\
%	\subfloat[Region 3]{\centering\includegraphics[width=0.3\columnwidth]{Figs/NonConvexRegionBased2Narrow3}}
%	\subfloat[Region 4]{\centering\includegraphics[width=0.3\columnwidth]{Figs/NonConvexRegionBased2Narrow4}}
%	\caption{Representation of the feasible region.\label{fig:Ex2NarrowRegion}}
%\end{figure}

\subsection{Cluttered Environment}
We consider the case illustrated in Fig.~\ref{fig:ProblemCluttered}, where we computed the optimal collision-free trajectory in a somewhat cluttered environment.
The initial condition is $\mu_0 = [-10, 0.1, 0, 0]$, $\Sigma_0 = \mathtt{diag}(0.05,0.05,0.001,0.001)$,
while the terminal constraint is $\mu_N = [0, 0, 0, 0]$, $\Sigma_N = \mathtt{diag}(0.01,0.01,0.001,0.001)$.
First, for comparison, we illustrate the result of a mean steering controller in Fig.~\ref{fig:ProblemClutteredOL}.
This result is obtained by imposing $K\equiv0$ in (\ref{eq:U=V+KAyDW}). 
In this case, the covariance is not controlled, and thus, the terminal covariance constraint cannot be satisfied.
Therefore, the terminal covariance constraint (\ref{eq:XNNewIneq}) is not imposed.
The controller finds that the ``corridor'' between $y=0$ and $y=1$ is too narrow and the path needs to detour to the top region.
In contrast, the proposed approach shrinks the covariance and successfully follows the shortest corridor while satisfying the non-convex state constraints as shown in Fig.~\ref{fig:ProblemClutteredCL}

\begin{figure}
	\centering
	\subfloat[Problem Setup.\label{fig:ProblemCluttered}]{\centering\includegraphics[width=0.5\columnwidth]{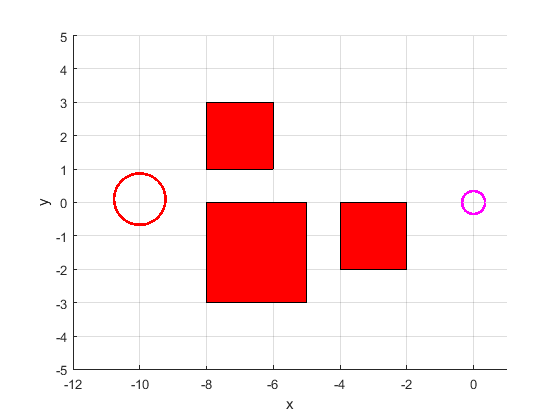}}\\
	\subfloat[Mean steering.\label{fig:ProblemClutteredOL}]{\centering\includegraphics[width=0.5\columnwidth]{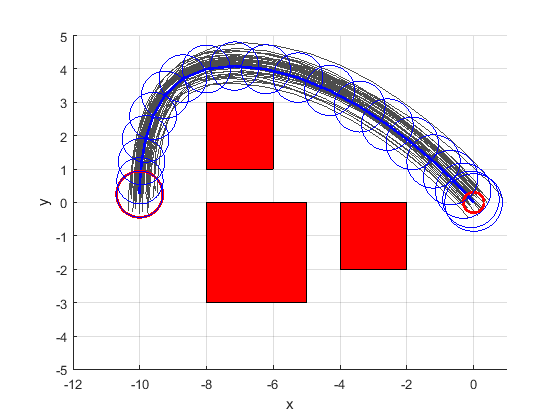}}
	\subfloat[Proposed approach.\label{fig:ProblemClutteredCL}]{\centering\includegraphics[width=0.5\columnwidth]{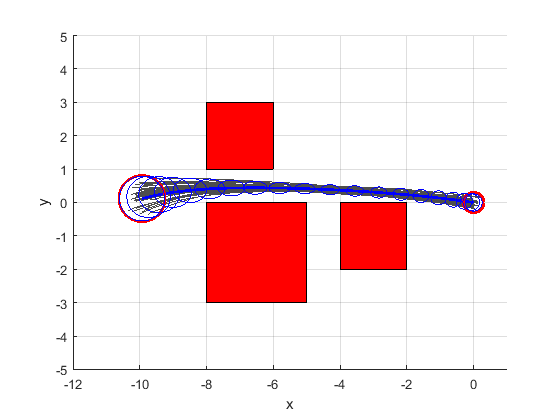}}
	\caption{Problem setup and solution.}
\end{figure}

\section{Discussion and Conclusion}\label{sec:Discussion&Conclusion}
The proposed approach is a non-trivial extension of our prior work \cite{okamoto2018Optimal}, where we addressed the problem of optimal covariance steering under \emph{convex} state chance constraints. 
The approach in~\cite{okamoto2018Optimal} could not deal with \emph{non-convex} state chance constraints.
Thus, in order to apply covariance steering to path planning problems, a different approach was needed.
Deterministic path-planning algorithms typically use Boolean variables to indicate collision with obstacles. 
As a result, the problem is converted to a mixed-integer programming problem \cite{richards2002spacecraft}.
It is known, however, that this approach typically needs separate integer variables for each face of each obstacle, which leads to a large computational overhead.
Here, we employed, instead, an approach similar to \cite{deits2015efficient}, in which integer variables indicate in which sub-region the state variable exists at each time step, leading to a much lower computational cost.
In the process, we also proposed a computationally more efficient calculation of the gain matrix than the one proposed in \cite{okamoto2018Optimal}.

We need to stress that, because we represent the feasible state space as the union of feasible convex sub-regions~(\ref{eq:NonConvexDecomposed}), the solution may become conservative, as illustrated in Fig.~\ref{fig:2ProblemNarrowZoom}, which shows the solution in Fig.~\ref{fig:2ProblemSetupNarrow} around the lower left corner of the middle obstacle.
The green areas are the feasible regions, and the red area is the obstacle.
The black lines indicate the edges of the sub-regions.
The error ellipse touches these boundaries, but it does not touch the corner of the obstacle, which indicates that the path has unnecessarily large safety margin from the obstacle.
This extra margin is owing to the requirement that this error ellipse needs to belong to both Regions 1 and 2 (see Fig.~\ref{fig:Ex2WideRegion}).
The decomposition in~(\ref{eq:NonConvexDecomposed}) is not unique. 
An interesting question would be to find the ``best'' decomposition to a union of convex sets for our problem.

\begin{figure}
	\vspace{-10pt}
	\centering
	\includegraphics[width=0.5\columnwidth]{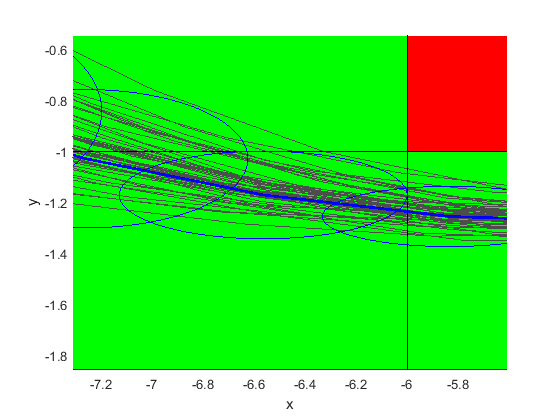}
	\caption{Zoom up of the solution illustrated in Fig.~\ref{fig:2SolutionNarrow}.\label{fig:2ProblemNarrowZoom}}
	\vspace{-10pt}
\end{figure}

In summary, in this work, we have addressed the problem of optimal covariance steering under non-convex state chance constraints.
We proposed to solve this problem by converting the original problem into a mixed-integer convex programing problem, which can be efficiently solved using an optimization solver.
In our numerical simulations, the proposed algorithm successfully found collision-free paths.
To the best of our knowledge, this is the first work to solve the optimal covariance steering problem with non-convex state chance constraints.
Future work includes the investigation of an effective approach to separate the feasible state space to a union of convex sets.

\vspace*{2ex}
\textbf{Acknowledgment:}
This work has been supported by ARO award W911NF-16-1-0390 and NSF award CPS-1544814.
K. Okamoto was also supported by Funai Foundation for Information Technology and Ito Foundation USA-FUTI Scholarship.
\bibliographystyle{IEEEtran}
\bibliography{NonConvexCCCS}

\end{document}